\documentclass[twoside,12pt]{article}

\usepackage[utf8]{inputenc}
\usepackage{amssymb,amsmath,amsthm,subfiles,mathtools,mathrsfs,comment,dsfont,mathrsfs,graphicx,enumerate,caption,wrapfig,float,hyperref,titlesec}
\usepackage[dvipsnames]{xcolor}

\makeatletter
\newcommand*\bigcdot{\mathpalette\bigcdot@{1}}
\newcommand*\bigcdot@[2]{\mathbin{\vcenter{\hbox{\scalebox{#2}{$\m@th#1\bullet$}}}}}
\makeatother

%the following writes roman numerals eg: \rom{2020}
\makeatletter
\newcommand*{\rom}[1]{\expandafter\@slowromancap\romannumeral #1@}
\makeatother

\usepackage{titleps,lipsum}% http://ctan.org/pkg/{titleps,lipsum}

\usepackage{caption}
\usepackage{subcaption}

\usepackage{afterpage}

\usepackage[T1]{fontenc}

\author{
 \scshape Lian Haeming \\
  \textit{Queen Mary University of London}
}

\graphicspath{{\string~/Desktop/Tex Files/Lyapunov rectangle boxes.jpg}}
    
\hypersetup{
    colorlinks=true,
    linkcolor=MidnightBlue,
    citecolor = MidnightBlue,
    linktoc = all
}

\setlength{\parskip}{1ex plus 0.5ex minus 0.75ex}
\usepackage{xcolor}
\pagecolor{white}
\usepackage[margin=0.72in]{geometry}
\numberwithin{equation}{section}
\title{\scshape \bfseries \Large on the real eigenvalues of the non-hermitian anderson model}
\date{}
\newcommand\restr[2]{{% we make the whole thing an ordinary symbol
  \left.\kern-\nulldelimiterspace % automatically resize the bar with \right
  #1 % the function
  \vphantom{\big|} % pretend it's a little taller at normal size
  \right|_{#2} % this is the delimiter
  }}

\newcommand\E{\mathds{E}}
\newcommand\R{\mathbb{R}}
\newcommand\C{\mathbb{C}}

\makeatletter
\newcommand{\newreptheorem}[2]{\newtheorem*{rep@#1}{\rep@title}\newenvironment{rep#1}[1]{\def\rep@title{#2 \ref*{##1}}\begin{rep@#1}}{\end{rep@#1}}}
\makeatother

\begin{comment}
%this will make the numbers of the theorems in the introduction black and note hyperlinked
\newtheorem*{rep@theorem}{\rep@title}
\newcommand{\newreptheorem}[2]{%
\newenvironment{rep#1}[1]{%
 \def\rep@title{#2 \ref{##1}}%
 \begin{rep@theorem}}%
 {\end{rep@theorem}}}
%note that if \end{comment} is indented, there will be an error!
\end{comment}
 
\begin{comment}
scshape for caps bfseries for bold 
\end{comment}

\def\restrict#1{\raise-.5ex\hbox{\ensuremath|}_{#1}}
\newtheorem{thm}{\normalfont\bfseries Theorem}
\newtheorem{lemma}[thm]{\normalfont\bfseries  Lemma}

\newtheorem{rmk}[thm]{\normalfont\bfseries Remark}
\newtheorem{prop}[thm]{\normalfont\bfseries Proposition}

\newreptheorem{theorem}{\normalfont\bfseries Theorem}
\newreptheorem{lemma}{\normalfont\bfseries Lemma}
\newreptheorem{prop}{\normalfont\bfseries Proposition}
\newreptheorem{cor}{\normalfont\bfseries Corollary}

\makeatletter
\let\hdrtitle\@title% To store the title after using \maketitle
\newpagestyle{main}{%
  \sethead[][\scshape{Quasiballistic motion}][\thepage] % even
  {}{\scshape{L. Haeming}}{\thepage}} % odd
\makeatother
\pagestyle{main}

\titlespacing*{\section}{0pt}{4ex}{1.5ex}

\titleformat{\section}[block]{\color{black}\scshape\filcenter}{\thesection.}{0.5em}{}

\begin{document}
\renewcommand{\abstractname}{\vspace{-\baselineskip}}
\maketitle

\begin{abstract}
We study the non-Hermitian Anderson model on the ring. We provide the exact rate of decay of the sensitivity of the eigenvalues to the non-Hermiticity parameter $g$, on the logarithmic scale, as the Lyapunov exponent  minus the non-Hermiticity parameter. Namely, for $0 < g < \gamma(\lambda_{0})$ we show that $-\frac{1}{n}\log|\lambda_{g}-\lambda_{0}|\sim \gamma(\lambda_{0})-g$ and that the eigenvalue remains real for all such $g$. This provides an alternative proof to that of Goldsheid and Sodin that the perturbed eigenvalue remains real and specifies the exact rate at which the eigenvalue is exponentially close to the unperturbed eigenvalue. 
\end{abstract}

\section*{Introduction}

Let $v_{1},v_{2},\dots$ be a bounded sequence of independent identically  distributed random variables. We study the one-dimensional non-Hermitian Anderson model with periodic boundary conditions 
\begin{equation}\label{non:model}
(H_{n}(g)\psi)_{k} = e^{-g}\psi_{k-1}+e^{g}\psi_{k+1}+v_{k}\psi_{k}, \quad 1\leq k\leq n, \quad \psi_{0}=\psi_{n}, \,\, \psi_{1}=\psi_{n+1}.
\end{equation}

\begin{comment}
which can be expressed in matrix form as 
\begin{equation}\label{non:floquet}
H_{n}(g) = 
\begin{pmatrix}
v_{1} & e^{g} & & e^{-g} \\ 
e^{-g} & \ddots &\ddots &  \\
 & \ddots & \ddots & e^{g} \\
e^{g}&  & e^{-g} & v_{q} \\
\end{pmatrix}. 
\end{equation}
\end{comment}

This model was first introduced by Hatano and Nelson \cite{HaNe96,HaNe98} to describe the reaction of an Anderson-localised quantum particle on a ring to a constant imaginary vector field. Our focus is on the behaviour of the individual (real) eigenvalues $\lambda_{j}(g)$ of \eqref{non:model} in the limit $n\rightarrow\infty$. Since the operator $H_{n}(g)$ is non-Hermitian for $g>0$, the eigenvalues are not necessarily real. The numerical work of Hatano and Nelson suggests that all of the eigenvalues remain in the real axis for sufficiently small $0\leq g<g_{1}$;  all eigenvalues move out of the real axis and align along a smooth curve on the complex plane for sufficiently large $g>g_{2}$; and we see a combination of the two for $g_{1}<g<g_{2}$. 

For a more general setting which includes \eqref{non:model} as a special case, Goldsheid and Khoruzhenko \cite{GoKh98, GoKh00,GoKh03} showed that the behaviour of the eigenvalues of \eqref{non:model} depends on the Lyapunov exponent associated with the Hermitian operator, 
\begin{equation}\label{non:lyapunov}
\gamma(E) = \lim_{n\rightarrow\infty} \frac{1}{n}\E \log \|A_{E,n}\cdots A_{E,1}\|, \quad A_{E,k}=
\begin{pmatrix}
E-v_{k}& -1 \\
1 & 0
\end{pmatrix}.
\end{equation} It was shown that on an event of asymptotically full probability: The perturbed eigenvalues remain in the vicinity of the real axis when the non-Hermiticity parameter is less than the Lyapunov exponent $g<\gamma(\lambda_{j}(0))$. And when it is greater than the Lyapunov exponent $g>\gamma(\lambda_{j}(0))$ they spread out in a regular fashion along certain polynomial curves in the complex plane, which tend to the level curve of the Lyapunov exponent $\{z\in\C:\gamma(z)=g\}$ as $n\rightarrow\infty$. For existence of the limit \eqref{non:lyapunov}, see e.g.\ the lecture notes of Viana \cite{Vi14}. Furstenberg’s theorem \cite{Fu63} shows that the Lyapunov exponent is positive on the real line $\min_{\R}\gamma>0$.

\begin{thm}\label{non:thm}
Let $v_{1},\dots,v_{n}$ be a sequence of i.i.d.\ random variables.  For any $0<\varepsilon<\varepsilon_{0}$ there exist $c=c(\varepsilon)$ and $N_{0}=N_{0}(\varepsilon)$  such that if $n>N_{0}$, then with probability $>1-e^{-cn}$: For each $1\leq j\leq n$, if $0\leq g\leq \gamma(\lambda_{j}(0))-\varepsilon$ then the $j$-th eigenvalue is real $\lambda_{j}(g)\in\R$  and  satisfies 
\begin{equation}\label{non:10}
(1-e^{-ng})^{2}e^{-(\gamma(\lambda_{j}(0))-g+\varepsilon)n}\leq |\lambda_{j}(g)-\lambda_{j}(0)|<e^{-(\gamma(\lambda_{j}(0))-g-\varepsilon)n}.
\end{equation}
\end{thm}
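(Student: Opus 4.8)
The plan is to reduce everything to the transfer‑matrix form of the eigenvalue equation. Substituting $\psi_k=e^{-kg}\chi_k$ turns $H_n(g)\psi=z\psi$ into the unperturbed Hermitian recursion $\chi_{k+1}=(z-v_k)\chi_k-\chi_{k-1}$, while the periodic boundary condition becomes the twisted one $(\chi_{n+1},\chi_n)^{\top}=e^{ng}(\chi_1,\chi_0)^{\top}$. Hence $z$ is an eigenvalue of $H_n(g)$ iff $e^{ng}$ is an eigenvalue of $T_n(z):=A_{z,n}\cdots A_{z,1}$, which, since $\det T_n(z)=1$, is equivalent to $\operatorname{tr}T_n(z)=e^{ng}+e^{-ng}$. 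Writing $P_n(z):=\operatorname{tr}T_n(z)$ — a monic polynomial of degree $n$ with $P_n(z)-2=\det(zI-H_n(0))=\prod_{i=1}^n(z-\lambda_i(0))$, all roots real — and $\Delta_n:=2\cosh(ng)-2=(1-e^{-ng})^2e^{ng}$, the characteristic polynomial of $H_n(g)$ is
\[
\det(zI-H_n(g))=\det(zI-H_n(0))-\Delta_n=\prod_{i=1}^n\bigl(z-\lambda_i(0)\bigr)-\Delta_n ,
\]
so the eigenvalues of $H_n(g)$ are exactly the roots of a constant shift of a polynomial with only real roots, and the whole problem becomes one of locating those roots.

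The probabilistic heart of the argument is a two‑sided control of $P_n'$ at the unperturbed eigenvalues. Concretely, I would prove that for any prescribed small rate $\varepsilon'$ there is $c=c(\varepsilon')>0$ such that, with probability $\ge 1-e^{-cn}$, uniformly in $j$,
\[
e^{(\gamma(\lambda_j(0))-\varepsilon')n}\;\le\;\bigl|P_n'(\lambda_j(0))\bigr|=\prod_{k\ne j}\bigl|\lambda_j(0)-\lambda_k(0)\bigr|\;\le\;e^{(\gamma(\lambda_j(0))+\varepsilon')n},
\]
together with a Cauchy estimate $\sup_{|z-\lambda_j(0)|\le\delta}|P_n''(z)|\le C_\delta\,e^{(\gamma(\lambda_j(0))+\varepsilon')n}$ for a fixed small $\delta$, and a level‑repulsion bound $\min_{i\ne j}|\lambda_i(0)-\lambda_j(0)|\ge e^{-\varepsilon'n}$. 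The upper bound and the $P_n''$ estimate follow from the standard large‑deviation bound $\tfrac1n\log\|T_n(z)\|\le\gamma(z)+\varepsilon'/2$ (made uniform over $z$ in a fixed compact neighbourhood in $\mathbb{C}$ of $\bigcup_n\sigma(H_n(0))$ by a net argument, subharmonicity of $z\mapsto\tfrac1n\log\|T_n(z)\|$ and continuity of $\gamma$), applied to $P_n=\operatorname{tr}T_n$; the level‑repulsion bound from a Minami‑type estimate. The lower bound — equivalently, that the derivative of the discriminant at a periodic eigenvalue grows at exactly the Lyapunov rate — is the crux, and I expect it to be the main obstacle. I would obtain it either as a finite‑volume Thouless‑type identity, estimating $\tfrac1n\sum_{k\ne j}\log|\lambda_j(0)-\lambda_k(0)|$ against $\gamma(\lambda_j(0))=\int\log|\lambda_j(0)-E|\,dN(E)$ ($N$ the integrated density of states) by means of a convergence rate for the empirical eigenvalue measure together with the level‑repulsion bound to tame the logarithmic singularity from the nearest eigenvalues; or via the generic Jordan‑block structure $T_n(\lambda_j(0))=I+N_j$ with $\|N_j\|\asymp\|T_n(\lambda_j(0))\|\in[e^{(\gamma-\varepsilon')n},e^{(\gamma+\varepsilon')n}]$, showing that the basis bringing $T_n(\lambda_j(0))$ to Jordan form is subexponential by exploiting the exponential localisation of the corresponding periodic eigenfunction.

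Granting these estimates, the remainder is elementary real analysis. Each $\lambda_j(0)$ is a simple root of $P_n-2$, so $P_n-2$ changes sign there; on exactly one side of $\lambda_j(0)$ it is positive, and either that side is unbounded or, by the lower bound on $|P_n'(\lambda_j(0))|$, the level‑repulsion bound and the $P_n''$ estimate, $P_n-2$ rises past the value $e^{(\gamma(\lambda_j(0))-\varepsilon/2)n}$ on that side before returning to $0$. Now fix $j$ and $0\le g\le\gamma(\lambda_j(0))-\varepsilon$, so that $\Delta_n\le e^{ng}\le e^{(\gamma(\lambda_j(0))-\varepsilon)n}$ lies strictly below this height; setting $\rho_j:=2\Delta_n/|P_n'(\lambda_j(0))|$, a short bootstrap using the $P_n''$ estimate shows $P_n'\ge\tfrac12|P_n'(\lambda_j(0))|>0$ throughout the sub‑interval of length $\rho_j$ adjacent to $\lambda_j(0)$ on the positive side, so $P_n-2$ attains $\Delta_n$ inside it. This produces one real root $\lambda_j(g)$ of $P_n-2-\Delta_n$ in that interval, unique there, depending continuously on $g$ with $\lambda_j(0)$ as its value at $g=0$. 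Carrying this out for every $j$ exhibits $n$ distinct real roots of the degree‑$n$ polynomial $P_n-2-\Delta_n$ (the defining intervals are pairwise disjoint because each $\rho_j$ is far smaller than the minimal gap $e^{-\varepsilon'n}$), so all eigenvalues of $H_n(g)$ are real.

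Finally, by the mean value theorem $\Delta_n=P_n(\lambda_j(g))-P_n(\lambda_j(0))=P_n'(\xi_j)\,(\lambda_j(g)-\lambda_j(0))$ with $\xi_j$ between the two points; since $\xi_j$ lies in the length‑$\rho_j$ interval, the $P_n''$ estimate gives $\tfrac12|P_n'(\lambda_j(0))|\le|P_n'(\xi_j)|\le\tfrac32|P_n'(\lambda_j(0))|$, whence
\[
\tfrac23\,\frac{\Delta_n}{|P_n'(\lambda_j(0))|}\;\le\;|\lambda_j(g)-\lambda_j(0)|\;\le\;2\,\frac{\Delta_n}{|P_n'(\lambda_j(0))|}.
\]
Substituting $\Delta_n=(1-e^{-ng})^2e^{ng}$ and the two‑sided bound $|P_n'(\lambda_j(0))|\in[e^{(\gamma(\lambda_j(0))-\varepsilon')n},e^{(\gamma(\lambda_j(0))+\varepsilon')n}]$ then yields \eqref{non:10}, upon choosing $\varepsilon'$ to be a sufficiently small multiple of $\varepsilon$ and absorbing the constants $\tfrac23,2$ into the exponent for $n$ large.
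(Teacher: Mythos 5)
Your overall architecture is sound and in fact runs parallel to the paper's: both arguments reduce the eigenvalue equation to $\Delta_{n}(\lambda_{j}(g))=2\cosh(ng)$ via the gauge transformation, both get the lower bound in \eqref{non:10} from the mean value theorem plus an upper bound $|\Delta_{n}'|\leq e^{(\gamma+\varepsilon)n}$ coming from large deviations, and both get the upper bound and the realness from an exponential \emph{lower} bound on the derivative of the discriminant near $\lambda_{j}(0)$. Your construction of $n$ distinct real roots of $P_{n}-2-\Delta_{n}$ by intermediate value theorem plus a bootstrap on intervals of length $\rho_{j}$ is a legitimate and arguably cleaner alternative to the paper's route through the turning points of the trace, and your identity $P_{n}'(\lambda_{j}(0))=\prod_{k\neq j}(\lambda_{j}(0)-\lambda_{k}(0))$ is correct on the event where the eigenvalues are simple.

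The genuine gap is exactly where you flag it: the lower bound $|P_{n}'(\lambda_{j}(0))|\geq e^{(\gamma(\lambda_{j}(0))-\varepsilon')n}$ is asserted, called ``the crux,'' and then only gestured at via two speculative routes (a finite-volume Thouless identity, or a Jordan-block argument), neither of which is carried out. Everything else in your proof --- the realness, the upper bound in \eqref{non:10}, the disjointness of the localising intervals --- is conditional on this estimate, so as written the proposal does not close. Note also that the Thouless route is not a routine computation: it requires a quantitative rate of convergence of the empirical eigenvalue measure to the integrated density of states that is strong enough to survive multiplication by $n$ in the exponent, together with control of the logarithmic singularity, and this is comparable in difficulty to the result you are trying to prove. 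The paper obtains precisely this input differently: it invokes the sharp upper bound on the bandwidths, $|B_{n}^{(j)}|<e^{-(\gamma(\lambda_{j}(0))-\varepsilon)n}$ (Proposition \ref{non:bandwidth}, proved in \cite{Ha22} via exponential decay of Floquet eigenvectors), and converts it into a derivative lower bound through Last's inequality $|\Delta_{n}'(E_{j})|\geq(1+\sqrt{5})/|B_{n}^{(j)}|$. If you substitute that pair of results for your unproven claim (taking a little care to transfer the bound from the root $E_{j}$ of $\Delta_{n}$ to the nearby point $\lambda_{j}(0)$ where $\Delta_{n}=2$), your argument goes through; without some such input it does not.
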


The question of whether the eigenvalues are truly real for $g<\gamma(\lambda_{j}(0))-\varepsilon$ was established by Goldsheid and Sodin \cite[Theorem 1]{GoSo18}, who showed that on an event of asymptotically full probability: For each $1\leq j\leq n$, if the non-Hermiticity parameter is less than the Lyapunov exponent $0\leq g< \gamma(\lambda_{j}(0))-\varepsilon$, 
then the perturbed eigenvalue is real $\lambda_{j}(g)\in\R$ and is exponentially close to the unperturbed eigenvalue $|\lambda_{j}(g)-\lambda_{j}(0)|<e^{-c_{0}(\varepsilon)n}$, for an unspecified positive constant $c_{0}(\varepsilon)>0$. Our upper bound shows $c_{0}(\varepsilon)=c_{0}(\varepsilon,g,j)\approx \gamma(\lambda_{j}(0))-g$ and our lower bound shows that it is optimal. The factor of $(1-e^{-ng})^{2}$ in the lower bound is chosen so that the LHS matches the RHS and comes from $2\cosh(ng)-2 = e^{ng}(1-e^{-ng})^{2}$. Equality in the LHS of \eqref{non:10} occurs for $g=0$. Our theorem as well as \cite{GoSo18} requires the potential to be i.i.d.

\section*{Acknowledgements}
It is a pleasure to thank Mira Shamis for proposing this work and Sasha Sodin for his support towards its completion.  This work was supported by an EPSRC PhD fellowship and supported in part by an EPSRC research grant (EP/X018814/1) and by a Philip Leverhulme Prize of the Leverhulme Trust (PLP-2020-064).

\section{Proof of the theorem}

There are two main steps to the proof of the theorem. The first has been established earlier by \cite[Theorem 1]{GoSo18} which is to show that the eigenvalue is real $\lambda_{j}(g)\in\R$ for $0\leq g\leq \gamma(\lambda_{j}(0))-\varepsilon$. The second is to obtain the bounds \eqref{non:10}, which requires the first step as an input. We state the first step as Lemma \ref{non:part1} and provide an alternative proof of it in the final section. The remainder of the present section is dedicated only to the proof of the estimates \eqref{non:10}, with Lemma \ref{non:part1} taken as an input. The probability of the event of Lemma \ref{non:part1} was not mentioned in \cite{GoSo18}. 

\begin{lemma}[{{\cite[Theorem 1]{GoSo18}}}]\label{non:part1}
Let $v_{1},\dots,v_{n}$ be a sequence of i.i.d.\ random variables. For any $0<\varepsilon<\varepsilon_{0}$ there exist $N_{0}=N_{0}(\varepsilon)$ and $c=c(\varepsilon)$ such that if $n>N_{0}$, then with probability $>1-e^{-cn}$: For every $1\leq j\leq n$, if $0\leq g\leq \gamma(\lambda_{j}(0))-\varepsilon$ then the eigenvalue is real $\lambda_{j}(g)\in\R$. 
\end{lemma}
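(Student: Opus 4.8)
The plan is to reduce the eigenvalue problem to a single real polynomial equation and then read off reality from the shape of its graph. Conjugating the transfer matrix $\begin{pmatrix}e^{-g}(z-v_k)&-e^{-2g}\\ 1&0\end{pmatrix}$ of $H_n(g)$ by $S=\mathrm{diag}(e^{g/2},e^{-g/2})$ gives $e^{-g}S^{-1}A_{z,k}S$ (here $A_{z,k}=\begin{pmatrix}z-v_k&-1\\1&0\end{pmatrix}$ is the matrix of \eqref{non:lyapunov} with $E=z$), so the monodromy matrix of $H_n(g)$ equals $e^{-ng}S^{-1}M_n(z)S$ with $M_n(z):=A_{z,n}\cdots A_{z,1}$. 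The periodic boundary condition says precisely that this monodromy matrix has $1$ as an eigenvalue; since $\det M_n(z)=1$, this is equivalent to $1-e^{-ng}\,\mathrm{tr}\,M_n(z)+e^{-2ng}=0$, i.e.\ to
\[
D(z):=\mathrm{tr}\,M_n(z)=2\cosh(ng).
\]
Consequently $\det(zI-H_n(g))=D(z)-2\cosh(ng)$, where $D$ is a \emph{real} monic polynomial of degree $n$; in particular $D(z)-2=\det(zI-H_n(0))=\prod_j(z-\lambda_j(0))$, so the $\lambda_j(0)$ are exactly the real solutions of $D(z)=2$ and the $\lambda_j(g)$ are the roots of $D(z)-2\cosh(ng)$.

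Fix $j$ and let $I_j$ be the connected component of $\{E\in\R:D(E)>2\}$ having $\lambda_j(0)$ on its boundary; if $I_j$ is a half-line then $\sup_{I_j}D=+\infty$ and there is nothing to prove, so assume $I_j=(\lambda,\lambda')$ is bounded, with $\lambda,\lambda'$ consecutive periodic eigenvalues. By Rolle's theorem (and simplicity of the $\lambda_j(0)$, which is part of the event below) $D'$ has exactly one zero between consecutive zeros of $D-2$, so $D$ is unimodal on $\overline{I_j}$: it rises from $2$ to a single maximum and falls back to $2$. Hence the root $\lambda_j(g)$ continued from $\lambda_j(0)$ stays in $I_j$ while it is real (it cannot cross $\{D\le 2\}$, where $D-2\cosh(ng)\ne0$) and leaves the real axis exactly when it meets that maximum; therefore
\[
\lambda_j(g)\in\R\quad\Longleftrightarrow\quad 2\cosh(ng)\le\sup_{I_j}D .
\]
(A degenerate periodic eigenvalue with $D'(\lambda_j(0))=0$, $D''(\lambda_j(0))<0$ would be a genuine obstruction; such configurations are excluded on the event below.) So it suffices to show that with probability $\ge 1-e^{-cn}$ one has $\sup_{I_j}D\ge 2\cosh\!\big((\gamma(\lambda_j(0))-\varepsilon)n\big)$ for every $j$.

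Evaluating $D-2=\prod_i(z-\lambda_i(0))$ at the midpoint of $I_j$ and using eigenvalue separation to control the remaining factors, $\sup_{I_j}D-2\gtrsim |I_j|\cdot|D'(\lambda_j(0))|$. (Equivalently, near $\lambda_j(0)$ the secular equation gives $\lambda_j(g)-\lambda_j(0)\approx(2\cosh(ng)-2)/D'(\lambda_j(0))$, which is manifestly real and of size $\approx e^{-(\gamma(\lambda_j(0))-g)n}$, matching the Theorem.) Thus the claim follows from two estimates, each holding with probability $\ge 1-e^{-cn}$: (i) a Minami/Wegner-type bound giving $\min_i|\lambda_i(0)-\lambda_{i+1}(0)|\ge e^{-\varepsilon n/8}$, and more generally that the periodic eigenvalues do not cluster near any $\lambda_j(0)$; (ii) $|D'(\lambda_j(0))|\ge e^{(\gamma(\lambda_j(0))-\varepsilon/4)n}$ for all $j$. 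Granting these, for $g\le\gamma(\lambda_j(0))-\varepsilon$ and $n$ large, $\sup_{I_j}D-2\gtrsim e^{(\gamma(\lambda_j(0))-\varepsilon/2)n}>e^{ng}\ge 2\cosh(ng)-2$, so $\sup_{I_j}D\ge 2\cosh(ng)$ and $\lambda_j(g)\in\R$. One takes $c=c(\varepsilon)$ small enough (e.g.\ $c<\varepsilon/8$) to afford (i), and a union bound over $j=1,\dots,n$ costs only a factor $n$.

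The hard part is estimate (ii). Since $|D'(\lambda_j(0))|=\prod_{i\ne j}|\lambda_j(0)-\lambda_i(0)|$, it asserts that the logarithmic potential of the empirical measure of periodic eigenvalues, evaluated at one of its own atoms, is at least the Lyapunov exponent $\gamma(\lambda_j(0))$ (by the Thouless formula) up to an error $\varepsilon$, uniformly over all $n$ random spectral points $\lambda_j(0)$. Establishing this requires (a) a large‑deviation estimate for $\tfrac1n\log|\det(zI-H_n(0))|$ around $\gamma(z)$, uniform in $z$ over a compact containing the spectrum, and (b) an anti‑clustering (higher‑order Minami) estimate guaranteeing that the few eigenvalues nearest $\lambda_j(0)$ contribute only $o(1)$ to $\tfrac1n\sum_{i\ne j}\log|\lambda_j(0)-\lambda_i(0)|$; these are the inputs responsible for the exponentially small exceptional probability and are closely tied to the localization estimates available in one dimension. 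The reduction itself — the secular equation and the unimodality argument — is elementary, and essentially all the work of this alternative proof lies in (i)–(ii).
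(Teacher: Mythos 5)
Your reduction is sound and is essentially the one the paper uses: the secular equation $\Delta_{n}(\lambda_{j}(g))=2\cosh(ng)$ (the paper's \eqref{non:discriminant}), monotonicity/unimodality of the trace between its turning points, and the criterion that $\lambda_{j}(g)$ stays real as long as $2\cosh(ng)$ does not exceed the value of the trace at the adjacent turning points (with the half-line cases handled separately). The gap is that your two quantitative inputs are not proved. Estimate (ii), the exponential lower bound $|D'(\lambda_{j}(0))|\ge e^{(\gamma(\lambda_{j}(0))-\varepsilon/4)n}$, is exactly where the content of the lemma lives, and you explicitly defer it to a Thouless-formula/uniform-large-deviation/anti-clustering argument that you do not carry out; as written this is a roadmap, not a proof. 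The paper closes this step with no potential theory at all: Last's inequality \eqref{non:03} gives $|\Delta_{n}'(E_{j})|\ge (1+\sqrt{5})/|B_{n}^{(j)}|$ deterministically, and the bandwidth upper bound of Proposition \ref{non:bandwidth} (quoted from \cite{Ha22}) converts this into the required $e^{(\gamma(\lambda_{j}(0))-\varepsilon)n}$ lower bound on the derivative of the trace.

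Second, your passage from $|D'(\lambda_{j}(0))|$ to $\sup_{I_{j}}D-2$ by evaluating $\prod_{i}(z-\lambda_{i}(0))$ at the midpoint $m$ of $I_{j}$ is lossier than you indicate. For every eigenvalue $\lambda_{i}$ at or beyond the far endpoint of $I_{j}$, the ratio $|m-\lambda_{i}|/|\lambda_{j}(0)-\lambda_{i}|$ is in general only bounded below by $1/2$, so without a genuine anti-clustering estimate the product over $i\ne j$ can cost a factor $2^{-n}$, an exponential loss $e^{-n\log 2}$ that would destroy the conclusion wherever $\gamma(\lambda_{j}(0))<\log 2$. You flag anti-clustering as needed but again do not establish it, and the minimal-spacing bound alone does not give it. The paper avoids this entirely with the Markov inequality for polynomials \eqref{non:markov}, which relates $\max|\Delta_{n}|$ to $\max|\Delta_{n}'|$ on the interval between consecutive roots of the trace at the cost of a harmless factor $1/(2n^{2})$, requiring only the minimal spacing of Lemma \ref{non:spacings}. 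In short, the skeleton of your argument matches the paper's, but both load-bearing estimates are left unproven, and the midpoint evaluation should be replaced by a Markov-type inequality (or supplemented by a proved anti-clustering bound) to avoid an exponential loss.
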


The trace $\Delta_{n}(E) = \text{Tr}(A_{E,n}\cdots A_{E,1})$ of the $n$-step transfer matrices \eqref{non:lyapunov} is the key instrument in our argument for  \eqref{non:10}. The trace $\Delta_{n}$ is a polynomial in $E\in\R$ of degree $n\geq 1$ and has real coefficients. The trace satisfies  $|\Delta_{n}(E_{j}')|\geq 2$ at its turning points $E_{j}'\in\R$. The pre-image of the interval $[-2,2]$ under the trace is therefore equal to the union 
$$
\Delta_{n}^{-1}([-2,2])=\bigcup_{j=1}^{n}B_{n}^{(j)}
$$ of $n$ closed intervals $B_{n}^{(j)}\subset\R$, known as bands, which have mutually disjoint interiors. 

The proof of the upper bound \eqref{non:10} is a consequence of Lemma \ref{non:Last} and Proposition \ref{non:bandwidth}. Lemma \ref{non:Last} is an extension (see the proof below) of two inequalities of Last \cite[Lemma 1]{La94} and allows to estimate the distance from the eigenvalue to the root in terms of the corresponding bandwidth multiplied by the value of the trace at the eigenvalue (see \eqref{non:15}).  
 
 \begin{lemma}\label{non:Last}
Let $E_{1}\leq E_{2}\leq \dots$ and $E'_{1}\leq E'_{2}\leq \dots$ denote the roots and turning points of the trace $\Delta_{n}$, respectively. We have 
$$
|E-E_{j}|< \frac{e}{1+\sqrt{5}}|\Delta_{n}(E)||B_{n}^{(j)}|
$$
for $E\in(E'_{j-1},E'_{j})$ if $2\leq j\leq n-1$, for $E\in(E_{1},E'_{1})$ if $j=1$ and $E\in (E'_{n-1},E_{n})$ if $j=n$. 
\end{lemma}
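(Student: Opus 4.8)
The plan is to follow Last's strategy of estimating the polynomial $\Delta_n$ near a root via a two-sided comparison with the Chebyshev-type behaviour on the adjacent band, but to keep track of constants carefully so as to get the explicit factor $\frac{e}{1+\sqrt5}$. First I would fix $j$ and work on the interval $I_j$ (one of $(E'_{j-1},E'_j)$, $(E_1,E'_1)$, or $(E'_{n-1},E_n)$ according to the three cases), on which $\Delta_n$ is monotone and the unique root is $E_j$. Writing $\Delta_n(E) = c\prod_{i=1}^n (E-E_i)$, the logarithmic derivative gives $\frac{\Delta_n'(E)}{\Delta_n(E)} = \sum_i \frac{1}{E-E_i}$; the key point is that on $I_j$ the term $\frac{1}{E-E_j}$ dominates in the sense that all other terms $\frac{1}{E-E_i}$ have a definite sign once $E$ is on the correct side, so that $\bigl|\frac{\Delta_n'(E)}{\Delta_n(E)}\bigr| \geq \frac{1}{|E-E_j|}$, equivalently $|E-E_j| \geq \bigl|\frac{\Delta_n(E)}{\Delta_n'(E)}\bigr|$ — but in fact we want the reverse inequality, so I would instead integrate: for $E$ between $E_j$ and the turning point, $\log|\Delta_n(E)|$ can be bounded using the fact that $|\Delta_n| \le 2$ throughout the band and $\Delta_n' $ does not vanish on $I_j$.

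The cleanest route is the one Last uses: on the band $B_n^{(j)}$ the trace maps onto $[-2,2]$, so there is a point $\tilde E \in B_n^{(j)}$ with $\Delta_n(\tilde E)=\pm 2$ adjacent to $E_j$, and the band width $|B_n^{(j)}|$ controls how fast $\Delta_n$ can grow away from $[-2,2]$. Concretely I would (i) use that $\log|\Delta_n|$ is harmonic off the real axis with the bands as its zero set, or more elementarily (ii) compare $\Delta_n$ on $I_j$ with the rescaled Chebyshev polynomial on the band to the opposite side of $E_j$, giving a bound of the shape $|E - E_j| \le C\, |\Delta_n(E)|\,|B_n^{(j)}|$ with an absolute constant $C$. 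The two inequalities of Last referred to in the statement are presumably the bounds $|E-E_j| \le |\Delta_n(E)| \cdot (\text{something})$ for $E$ to the left and to the right of $E_j$; the "extension" is combining them into the single two-sided statement and handling the edge bands $j=1,n$ where only one turning point is available, which is why the interval in those cases runs all the way to the extreme root.

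The main obstacle will be pinning down the sharp constant $\frac{e}{1+\sqrt5}$ rather than just some absolute $C$. I expect this to come from optimising the estimate $\frac{1-t^2}{?}$ or a $\cosh$/$\arcosh$ computation: writing $2\cosh\theta = |\Delta_n(E)|$ for $E$ just outside the band, one has $|E - E_j|$ comparable to $|B_n^{(j)}| \sinh^2(\theta/2)$ or similar, and the numerical factor emerges from bounding $\sinh^2(\theta/2)/\cosh\theta \le$ const, or from an inequality of the form $\frac{e^{x}-1}{x}\le \frac{e}{1+\sqrt5}\cdot(\text{linear})$ on the relevant range — the $1+\sqrt5$ strongly suggests a golden-ratio extremum $x = \log\frac{1+\sqrt5}{2}$ where such a ratio is maximised, and $e$ suggests the bound is applied at an exponent of order $1$. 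I would isolate this as a one-variable calculus lemma, prove the two-sided polynomial estimate with that optimal constant on the generic bands $2\le j\le n-1$, and then check that the edge cases $j=1$ and $j=n$ only improve the constant because one of the competing contributions is absent.
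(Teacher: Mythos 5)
Your instinct that the lemma is obtained by combining two inequalities of Last, rather than by a fresh argument, is correct, but your proposal stops at the level of a plan and guesses the wrong decomposition. The paper's proof factors through the derivative of the trace at the root: Last's first estimate \cite[(3.13)]{La94} gives
\[
|E-E_{j}|\leq \frac{e\,|\Delta_{n}(E)|}{|\Delta_{n}'(E_{j})|}
\]
on exactly the intervals in the statement, and his second estimate \cite[(3.29)]{La94}, whose proof is repeated for the bands of $\Delta_{n}^{-1}([-2,2])$, gives
\[
|\Delta_{n}'(E_{j})|\geq \frac{1+\sqrt{5}}{|B_{n}^{(j)}|}.
\]
Multiplying the two yields the lemma. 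In particular, the constant $\frac{e}{1+\sqrt{5}}$ is not the output of a single joint optimisation: the factor $e$ lives entirely in the first inequality and $1+\sqrt{5}$ entirely in the second (a lower bound on the slope of the trace at a root in terms of the adjacent band length). Your hypothesis that the two inequalities of Last are the ``left of $E_{j}$'' and ``right of $E_{j}$'' versions of one bound is therefore not what happens, and the ``extension'' referred to in the paper is the transfer of these estimates from Last's setting to the bands of $\Delta_{n}^{-1}([-2,2])$ for the random trace, not the merging of two one-sided bounds.

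The concrete gap is that neither constituent inequality is actually established in your proposal: you list several possible routes (logarithmic derivative of the factorised polynomial, harmonicity of $\log|\Delta_{n}|$, comparison with a rescaled Chebyshev polynomial, a one-variable calculus lemma at a golden-ratio extremum) without committing to or executing any of them, and at one point you derive an inequality in the wrong direction ($|E-E_{j}|\geq |\Delta_{n}(E)/\Delta_{n}'(E)|$) and acknowledge that you need the reverse without resolving the issue. A correct write-up in the spirit of the paper needs only to state the two displayed inequalities above, observe that the first holds on the three intervals listed in the lemma (which is where the case distinction $j=1$, $2\leq j\leq n-1$, $j=n$ enters), note that the second holds for every band, and multiply; alternatively, if you insist on a self-contained proof, you must actually prove both displayed estimates, which is where all the work lies.
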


Proposition \ref{non:bandwidth} is an upper bound on the bandwidths in terms of the Lyapunov exponent \cite[Theorem 2]{Ha22} and follows from the fact that the eigenvectors of the Floquet matrix $H_{n}(ig)$, $g\geq0$ (with i.i.d.\ potential) decay exponentially away from a centre, with rate of exponential decay given by the Lyapunov exponent. For details see e.g.\ \cite[Lemma 2.1]{Ha22}).   

\begin{prop}[{{\cite[Theorem 2]{Ha22}}}]\label{non:bandwidth}
Let $v_{1},\dots,v_{n}$ be a sequence of i.i.d.\ random variables. For any $\varepsilon>0$ there exist $c=c(\varepsilon)$ and $N_{0} = N_{0}(\varepsilon)$ such that if $n>N_{0}$, then with probability $>1-e^{-cn}$:  For each $1\leq j\leq n$,
\begin{equation}\label{non:11}
|B_{n}^{(j)}| <e^{-(\gamma(\lambda_{j}(0)) -\varepsilon)n}.
\end{equation} 
\end{prop}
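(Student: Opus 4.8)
The plan is to realise each band $B_{n}^{(j)}$ as the range of an eigenvalue of the ring operator under a sweep of the boundary phase, and to bound the length of that range by first-order perturbation theory, the exponential smallness coming from the decay of the eigenfunctions.

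First, for $\theta\in[0,\pi]$ let $H_{n}(\theta)$ denote the operator on $\ell^{2}(\Z/n\Z)$ with the hoppings of \eqref{non:model} at $g=0$ but with the twisted boundary condition $\psi_{k+n}=e^{i\theta}\psi_{k}$; after a (non-periodic) gauge transformation this is the Floquet matrix $H_{n}(i\theta/n)$ of the introduction. The transfer matrices $A_{E,k}$ of \eqref{non:lyapunov} do not involve $\theta$, and $E$ is an eigenvalue of $H_{n}(\theta)$ exactly when $\Delta_{n}(E)=2\cos\theta$. Since $\Delta_{n}$ is monotone on each band with values $\pm2$ at its endpoints, the $j$-th eigenvalue $\lambda_{j}(\theta)$ of $H_{n}(\theta)$ is real-analytic and monotone on $[0,\pi]$ with range exactly $B_{n}^{(j)}$, and $\lambda_{j}(0)$ (the $j$-th eigenvalue at $g=0$) is one of its two endpoints. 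Hence
$$
|B_{n}^{(j)}|=\Bigl|\int_{0}^{\pi}\partial_{\theta}\lambda_{j}(\theta)\,d\theta\Bigr|\leq\pi\sup_{\theta\in[0,\pi]}|\partial_{\theta}\lambda_{j}(\theta)|.
$$
To estimate $\partial_{\theta}\lambda_{j}(\theta)$ at a fixed $\theta$, let $\psi$ be the normalised eigenfunction at $\lambda_{j}(\theta)$, localised about some site $k_{0}=k_{0}(\theta)$. The total flux $\theta$ is the only gauge invariant on the ring, so we may conjugate $H_{n}(\theta)$ by a diagonal unitary (which does not change the moduli $|\psi_{k}|$, nor the spectrum) so as to place the entire flux on the single bond $\{m,m+1\}$ with $m=k_{0}+\lfloor n/2\rfloor$; with this choice $\partial_{\theta}H_{n}(\theta)$ is supported on that bond, and the Feynman--Hellmann identity gives $|\partial_{\theta}\lambda_{j}(\theta)|=|\langle\psi,\partial_{\theta}H_{n}(\theta)\psi\rangle|\leq 2|\psi_{m}||\psi_{m+1}|$.

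The point is now that $m$ is at ring-distance $\lfloor n/2\rfloor$ from the localisation centre, so any decay bound $|\psi_{k}|\leq Ce^{-(\gamma(\lambda_{j}(\theta))-\delta)\,\mathrm{dist}(k,k_{0})}$ yields $|\partial_{\theta}\lambda_{j}(\theta)|\leq C^{2}e^{-(\gamma(\lambda_{j}(\theta))-\delta)(n-2)}$ and hence $|B_{n}^{(j)}|\leq\pi C^{2}e^{-(\gamma(\lambda_{j}(\theta))-\delta)(n-2)}$. This exponential decay of eigenfunctions, uniformly in $1\leq j\leq n$ and $\theta\in[0,\pi]$ on an event of probability $>1-e^{-cn}$, is exactly \cite[Lemma 2.1]{Ha22}: for i.i.d.\ potential, a large-deviation estimate for the products $A_{E,k}\cdots A_{E,1}$ forces every solution of the finite-difference equation at an energy $E$ in the spectrum to decay at rate $\gamma(E)-\delta$ on each side of its minimum, so the normalised eigenfunction decays as claimed with $k_{0}$ its minimum; uniformity in $\theta$ is automatic since $A_{E,k}$ is $\theta$-independent, and uniformity over the union of bands follows from a net in $E$ together with the polynomial $E$-dependence of the $A_{E,k}$. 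Feeding this in, one first obtains the crude bound $|B_{n}^{(j)}|\leq\pi C^{2}e^{-c_{0}n}$ from $\gamma\geq\min_{\R}\gamma>0$ (Furstenberg), so that $|\lambda_{j}(\theta)-\lambda_{j}(0)|\to0$ uniformly in $j,\theta$; then uniform continuity of $\gamma$ on the fixed compact spectral interval upgrades the rate to $\gamma(\lambda_{j}(\theta))\geq\gamma(\lambda_{j}(0))-\delta$, and choosing $\delta$ small and $n$ large absorbs the constant $\pi C^{2}$ and the linear-in-$n$ loss in the exponent into the $\varepsilon n$, giving \eqref{non:11}.

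The main obstacle is the sharp uniform decay estimate quoted from \cite[Lemma 2.1]{Ha22}: getting the optimal rate $\gamma(\lambda_{j}(\theta))$, not merely some positive rate, for all $n$ eigenfunctions and all boundary phases simultaneously, on an event of all-but-exponentially-small probability. The remaining ingredients — the identification of bands with Floquet eigenvalue curves, first-order perturbation theory in the flux with the twist placed antipodal to the localisation centre, and continuity of the Lyapunov exponent — are soft.
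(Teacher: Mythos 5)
Your proposal is correct and follows essentially the same route as the paper, which does not prove Proposition \ref{non:bandwidth} but imports it from \cite[Theorem 2]{Ha22} and indicates exactly the mechanism you develop: exponential decay of the eigenvectors of the Floquet (twisted--boundary) matrices away from a localisation centre at the Lyapunov rate, via \cite[Lemma 2.1]{Ha22}. Your realisation of $B_{n}^{(j)}$ as the range of $\lambda_{j}(\theta)$, the Feynman--Hellmann bound with the flux gauged onto the bond antipodal to the centre, and the bootstrap through Furstenberg positivity and continuity of $\gamma$ are a faithful expansion of that cited argument, with the uniform decay estimate correctly identified as the one genuinely hard input.
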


\begin{rmk}
The upper bound \eqref{non:11} is sharp in the sense that a matching lower bound $|B_{n}^{(j)}|>e^{-(\gamma(\lambda_{j}(0)) +\varepsilon)n}$ also holds  \cite[Theorem 1]{Ha22}. 
\end{rmk}

\begin{proof}[Proof of Theorem \ref{non:thm}]
The first part of the theorem, i.e., that the eigenvalue is real for $0\leq g\leq \gamma(\lambda_{j}(0))-\varepsilon$, follows from Lemma 
\ref{non:part1}, which is proved in the final section. Order the eigenvalues from left to right $\lambda_{1}(0)\leq \lambda_{2}(0)\leq \dots$ and let $E_{j}, E'_{j}$ be as stated in Lemma \ref{non:Last}. 

By computing the characteristic polynomial of the matrix which describes the action of the operator $H_{n}(g)$ (after the usual gauge transformation by the diagonal matrix $\text{Diag}(1,e^{g},\dots,e^{ng})$) and evaluating it at the eigenvalue $\lambda_{j}(g)$, one obtains the exact equality 
\begin{equation}\label{non:discriminant}
|\Delta_{n}(\lambda_{j}(g))|=2\cosh(ng).
\end{equation} 
Since the hyperbolic cosine is monotonic for $g\geq0$, and the trace is monotonic in between two turning points (as well as below the leftmost turning point $E'_{1}$, and the rightmost turning point $E'_{n-1}$); for any $1\leq j\leq n$, the eigenvalue $\lambda_{j}(g)$ is monotonic in $0\leq g\leq \gamma(\lambda_{j}(0))-\varepsilon$. 

\emph{Upper bound.} \eqref{non:discriminant} implies that the eigenvalue $\lambda_{j}(g)$ moves away (to the left or to the right) from the root $E_{j}$, therefore for each $1\leq j\leq n$ and $0\leq g\leq \gamma(\lambda_{j}(0))-\varepsilon$, the distance to the root is larger than the distance to the unperturbed eigenvalue, 
\begin{equation}\label{non:04}
|\lambda_{j}(g)-\lambda_{j}(0)|<|\lambda_{j}(g)-E_{j}|.
\end{equation}

According to Lemma \ref{non:Last} we must consider the inner eigenvalues (with indices $2\leq j\leq n-1$), separately from the outer eigenvalues (with indices $j=1,n$). 

For an inner eigenvalue with $0\leq g\leq \gamma(\lambda_{j}(0))-\varepsilon$ we have $\lambda_{j}(g)\in(E'_{j-1},E'_{j})$; so Lemma \ref{non:Last},  \eqref{non:discriminant} and \eqref{non:04} imply 
\begin{equation}\label{non:15}
|\lambda_{j}(g)-\lambda_{j}(0)|<2\cosh(ng)|B_{n}^{(j)}|.
\end{equation}
 The upper bound \eqref{non:10} for the inner eigenvalues then follows from Proposition \ref{non:bandwidth} applied to \eqref{non:15}.

Let us now consider the leftmost eigenvalue $\lambda_{1}$. Take $0\leq g\leq \gamma(\lambda_{1}(0))-\varepsilon$. If the eigenvalue moves to the right, i.e., $\lambda_{1}(0)>E_{1}$, then $\lambda_{1}(g)\in(E_{1},E'_{1})$. Lemma \ref{non:Last} still applies in this case and hence so does \eqref{non:15} (and \eqref{non:10}). Lemma \ref{non:Last} does not hold in the case that the eigenvalue moves to the left, i.e.,  $\lambda_{1}(0)<E_{1}$. In this case, define the leftmost two points $\lambda^{-}_{1}(g)<\lambda^{+}_{1}(g)$ which satisfy 
$$|\Delta_{n}(\lambda_{1}^{\pm}(g))|=2\cosh(ng).$$ The behaviour of the solution $\lambda_{1}^{+}(g)$ describes the scenario (described above) in which $\lambda_{1}(0)>E_{1}$ whereas $\lambda_{1}^{-}(g)$ describes the scenario in which $\lambda_{1}(0)<E_{1}$. For each $0\leq g\leq \gamma(\lambda_{1}(0))-\varepsilon$, we have $$|\lambda_{1}^{-}(g)-E_{1}|\leq |\lambda_{1}^{+}(g)-E_{1}|$$ since the derivative $\Delta'_{n}$ is strictly monotonic below $E'_{1}$. This argument holds analogously for $j=n$.

\emph{Lower bound.} Let  $0\leq g\leq \gamma(\lambda_{j}(0))-\varepsilon$ so that $\lambda_{j}(g)\in \R$ and suppose that the $j$-th eigenvalue $\lambda_{j}(g)$ has positive derivative. By the mean value theorem,
$$
2\cosh(ng)-2=|\Delta_{n}(\lambda_{j}(g))-\Delta_{n}(\lambda_{j}(0))|\leq |\lambda_{j}(g)-\lambda_{j}(0)|\max_{E\in[\lambda_{j}(0),\lambda_{j}(g)]}|\Delta'_{n}(E)|. 
$$ It suffice to show that for any $\varepsilon>0$, there exist $c=c(\varepsilon)$ and $N_{0}=N_{0}(\varepsilon)$ such that if $n>N_{0}$, then with probability $>1-e^{-cn}$: 
\begin{equation}\label{non:16}
\max_{E\in[\lambda_{j}(0),\lambda_{j}(g)]}|\Delta'_{n}(E)| \leq e^{(\gamma(\lambda_{j}(0))+\varepsilon)n}. 
\end{equation}

Indeed, by the large deviation theorem of Le Page \cite{LP82} (see also \cite[Lemma 2.1]{GoSo18}), for any $\varepsilon>0$, there exist $c=c(\varepsilon)$ and $N_{0}=N_{0}(\varepsilon)$ such that if $n>N_{0}$, then with probability $>1-e^{-cn}$: 
\begin{equation}\label{non:07}
\log\|A_{E,n}\cdots A_{E,1}\|-n\gamma(E)\leq n\varepsilon. 
\end{equation}
In \cite[Lemma 1.3]{Ha22} we show that \eqref{non:07} can be upgraded to hold uniformly in the energy on compact sets. Since the trace of the transfer matrix is at most twice its norm, one can obtain a similar uniform upper bound for the trace itself. One can then obtain a similar upper bound for the derivative of the trace (see e.g.\  \cite[Theorem 1]{Ha22}) by means of applying the Markov inequality (for a polynomial $p_{n}$ of degree $n$):
\begin{equation}\label{non:markov}
\max_{x\in [a,b]}|p_{n}'(x)|\leq \frac{2n^{2}}{b-a}\max_{x\in [a,b]}|p_{n}(x)|
\end{equation} locally, and use the continuity of the Lyapunov exponent due to Le Page \cite{LP83}. In particular, 
\begin{equation}\label{non:13}
\sup_{E\in K} (\log |\Delta'_{n}(E)| - n\gamma(E)) \leq n\varepsilon
\end{equation} where $K$ is a closed interval chosen to contain all spectra $\bigcup_{n\geq1}\sigma(H_{n}(0))\subset K$ so that $N_{0}(\varepsilon)$ doesn't depend on $j$. The upper bound \eqref{non:10} implies $\max_{0\leq g\leq \gamma(\lambda_{j}(0))-\varepsilon}|\lambda_{j}(g)-\lambda_{j}(0)|<e^{-\varepsilon n}$ and then continuity of the Lyapunov exponent implies $\max_{E\in[\lambda_{j}(0),\lambda_{j}(g)]}\gamma(E)\leq \gamma(\lambda_{j}(0))+\varepsilon$. \eqref{non:16} follows and so does the theorem, after appropriately scaling the errors in the exponents. 
\end{proof}

\begin{proof}[Proof of Lemma \ref{non:Last}]
An extension (for details on this extension see e.g. \cite[(5.9)]{LaSh16}, where they use $E_{\nu}$ to denote the $\nu$-th root of the discriminant $\Delta_{n}$, instead of $E_{j}$) of the first estimate of Last \cite[(3.13)]{La94} on the trace states
$$
|E-E_{j}|\leq \frac{e|\Delta_{n}(E)|}{|\Delta'_{n}(E_{j})| }
$$
which holds for $E\in(E'_{j-1},E'_{j})$ if $2\leq j\leq n-1$ and for $E\in(E_{1},E'_{1}) $ or $ E\in (E'_{n-1},E_{n})$ if $j=1$ or $j=n$, respectively. The proof of the second estimate of Last \cite[(3.29)]{La94} can be similarly repeated for the bands $B_{n}^{(j)}$ in the pre-image $\Delta_{n}^{-1}([-2,2])$ to obtain 
\begin{equation}\label{non:03}
|\Delta_{n}'(E_{j})|\geq \frac{1+\sqrt{5}}{|B_{n}^{(j)}|}
\end{equation}
for every $1\leq j\leq n$. 
\end{proof}

\section{Lemma \ref{non:part1}}

The general idea is to show that eigenvalue spacing (Lemma \ref{non:spacings}, below) and the upper bound on the bandwidths (Proposition \ref{non:bandwidth}) both imply that the turning points of the trace are sufficiently large: 
\begin{equation}\label{non:12}
|\Delta_{n}(E'_{j-1})|, |\Delta_{n}(E'_{j})|  > e^{(\gamma(\lambda_{j}(0))-\varepsilon)n} , \quad \, 2\leq j\leq n-1
\end{equation} 
so that the solution $\lambda_{j}(g)\in (E'_{j-1},E'_{j})$ to \eqref{non:discriminant} is  real for every $0\leq g\leq \gamma(\lambda_{j}(0))-\varepsilon$.

\begin{lemma}\label{non:spacings}
Let $v_{1},\dots,v_{n}$ be a sequence of i.i.d.\ random variables. For any $0<\varepsilon<\varepsilon_{0}$ there exist $N_{0} = N_{0}(\varepsilon)$ and $c=c(\varepsilon)$ such that if $n>N_{0}$ then with probability $>1-e^{-cn}$: 
$$\min_{j\neq j'}|\lambda_{j}(0)-\lambda_{j'}(0)|>e^{-\varepsilon n}.$$
\end{lemma}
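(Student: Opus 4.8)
The plan is to show that the eigenvalues $\lambda_j(0)$ of $H_n(0)$ cannot cluster too tightly, by relating their spacing to the structure of the bands $B_n^{(j)}$ and the trace $\Delta_n$. The key observation is that each real eigenvalue $\lambda_j(0)$ of the periodic operator $H_n(0)$ satisfies $|\Delta_n(\lambda_j(0))| = 2$ (this is the $g=0$ case of \eqref{non:discriminant}), so $\lambda_j(0)$ lies in the closure of the band $B_n^{(j)}$; in fact, ordering both the eigenvalues and the bands from left to right, $\lambda_j(0)$ lies in (the closure of) the $j$-th band, and consecutive eigenvalues $\lambda_j(0), \lambda_{j+1}(0)$ are separated by at least one turning point $E'_j$ with $|\Delta_n(E'_j)| \geq 2$. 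So it suffices to bound below the distance between a point of $B_n^{(j)}$ and a point of $B_n^{(j+1)}$, i.e., the length of the "gap" $(E'_{j-1}, E'_j)$ minus the two adjacent bands — equivalently, to bound below the distance from a turning point $E'_j$ to the nearest root $E_j$ or $E_{j+1}$.

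First I would make precise the band/eigenvalue interlacing: since $\Delta_n$ is a degree-$n$ real polynomial with $|\Delta_n| \geq 2$ at all $n-1$ turning points and leading behaviour $\Delta_n(E) \sim E^n$, the preimage $\Delta_n^{-1}([-2,2])$ consists of $n$ bands with disjoint interiors, and on each band $\Delta_n$ sweeps monotonically across $[-2,2]$ (or a subinterval at the ends), so $\Delta_n(E) = \pm 2$ has exactly the right count of solutions; the eigenvalues of $H_n(0)$, being solutions of $\Delta_n(E) = 2$, are distributed one per band (with the usual edge conventions). Next, the heart of the argument: I would invoke Lemma \ref{non:Last} (or rather its ingredient \eqref{non:03}) together with Proposition \ref{non:bandwidth}. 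From $|\Delta_n'(E_j)| \geq (1+\sqrt5)/|B_n^{(j)}|$ and the upper bound $|B_n^{(j)}| < e^{-(\gamma(\lambda_j(0)) - \varepsilon)n}$, the trace has a large derivative at each root; integrating this largeness of $\Delta_n'$ across the gap between two bands forces the turning point value $|\Delta_n(E'_j)|$ to be large — this is exactly \eqref{non:12}. Alternatively and more directly for spacing: applying Lemma \ref{non:Last} at $E = \lambda_j(0)$ itself gives $|\lambda_j(0) - E_j| < \tfrac{e}{1+\sqrt5} \cdot 2 \cdot |B_n^{(j)}|$, so each eigenvalue is within $O(e^{-(\gamma - \varepsilon)n})$ of its root $E_j$; since consecutive roots $E_j, E_{j+1}$ are separated by a turning point, it would then remain to bound the root spacing $|E_j - E_{j+1}|$ from below.

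The main obstacle is precisely this last point: a lower bound on the spacing of the roots (or turning points) of $\Delta_n$, which does not follow from bandwidth upper bounds alone. Here I expect to need a Thouless-type / Wegner-type input: the integrated density of states of the periodic operator has a density bounded above (on the bulk of the spectrum), uniformly in $n$, so that $n$ eigenvalues in a bounded interval $K$ cannot all fit in a subinterval of length $o(e^{-\varepsilon n})$ — but this only gives spacing on average, not for every pair. To get the uniform-in-$j$ bound with exponentially good probability, I would instead use a large-deviation estimate: fix a fine grid of scale $e^{-\varepsilon n}$ on $K$; for each grid cell, the probability that $H_n(0)$ has two eigenvalues inside it is controlled by a second-moment (or Minami-type) estimate for the periodic operator, and a union bound over the $\mathrm{poly}(n) \cdot e^{\varepsilon n}$ cells still leaves probability $> 1 - e^{-cn}$ provided the per-cell probability is $e^{-c'n}$ with $c'$ large compared to $\varepsilon$. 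Making the per-cell two-eigenvalue probability exponentially small is where the i.i.d.\ assumption enters and is the delicate step; one natural route is to condition on all but a few potential values in a window and use the resulting smooth conditional distribution of an eigenvalue to kill coincidences, combined with the exponential decay of eigenvectors (as in Proposition \ref{non:bandwidth}) to localise which potential values matter. Once Lemma \ref{non:spacings} is in hand, combining it with Proposition \ref{non:bandwidth} and the monotonicity of $\Delta_n$ on gaps yields \eqref{non:12}, and hence Lemma \ref{non:part1}.
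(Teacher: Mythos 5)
There is a genuine gap, though it is an honestly acknowledged one. The entire content of Lemma \ref{non:spacings} is the eigenvalue-repulsion estimate that you defer to ``the delicate step'': the band/trace machinery in your first two paragraphs does not advance the proof, because the eigenvalues of $H_n(0)$ sit inside bands of width $e^{-(\gamma-\varepsilon)n}$ (by Proposition \ref{non:bandwidth}), so bounding the eigenvalue spacing from below is essentially \emph{equivalent} to bounding the root spacing $|E_j-E_{j+1}|$ from below --- you have reformulated the problem, not reduced it. You correctly identify this yourself (``does not follow from bandwidth upper bounds alone''), but the sketch you then give --- a grid of scale $e^{-\varepsilon n}$, a union bound, and a per-cell two-eigenvalue probability made exponentially small by conditioning on a few site potentials plus eigenvector localisation --- is precisely the hard part, and for a general bounded i.i.d.\ potential (no regularity assumed on the law of $v_1$) the Minami estimate is unavailable and the conditioning argument requires the full strength of Bourgain's analysis. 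As written, the proposal does not constitute a proof. A smaller logical wrinkle: midway through you derive \eqref{non:12} as a consequence of \eqref{non:03} and Proposition \ref{non:bandwidth}, but in the paper \eqref{non:12} is obtained \emph{from} Lemma \ref{non:spacings} (via \eqref{non:05} and the Markov inequality), so invoking it inside a proof of the spacing lemma would be circular; your closing sentence shows you know the intended order, but the middle of the argument blurs it.

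For comparison, the paper does not prove this lemma at all: it is imported from the literature. The remark following the statement records that for distributions with a uniformly H\"older-$(\tfrac12+\delta)$ cumulative distribution function the conclusion follows from the Minami estimate \cite{Mi96}, while for singular distributions it follows from Bourgain's one-dimensional spacing result \cite{Bo14}, adapted from Dirichlet to periodic boundary conditions in \cite{Ha22}. So your instinct about which tools are needed is sound --- these are exactly the references that carry the argument --- but to match the paper you should either cite them or reproduce Bourgain's conditioning argument in full rather than gesturing at it.
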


\begin{rmk}
If the cumulative distribution function corresponding to the distribution of the random variable $v_{1}$ is uniformly Hölder of order $\frac{1}{2}+\delta$ then a conclusion similar to the one in Lemma \ref{non:spacings}, for arbitrary dimensions, follows from the Minami estimate \cite{Mi96}. For less regular distributions, Bourgain \cite{Bo14} showed that in dimension one and with Dirichlet boundary conditions, the eigenvalues separate. In \cite{Ha22} we show that the arguments of Bourgain hold for periodic boundary condition. 
\end{rmk}

\begin{proof}[Proof of Lemma \ref{non:part1}]
 Let $E_{j},E'_{j}$ be as stated in Lemma \ref{non:Last}. From Proposition \ref{non:bandwidth}, Lemma \ref{non:spacings} and positivity of the Lyapunov exponent, it follows that for any $0<\varepsilon<\varepsilon_{0}$, there exist $c(\varepsilon)>0$ and $N_{0}=N_{0}(\varepsilon)$ such that if $n>N_{0}$, then with probability $>1-e^{-cn}$: 
\begin{equation}\label{non:05}
\min_{j\neq j'}|E_{j}-E_{j'}|> \min_{j\neq j'}(|\lambda_{j}(0)-\lambda_{j'}(0)|-|B_{n}^{(j)}|-|B_{n}^{(j')}|)>e^{-\varepsilon n}.
\end{equation}   

The Markov inequality \eqref{non:markov}, applied to the trace, implies 
$$
\max_{E\in [E_{j},E_{j+1}]}|\Delta_{n}(E)| \geq \frac{E_{j+1}-E_{j}}{2n^{2}} \max_{E\in [E_{j},E_{j+1}]}|\Delta_{n}'(E)|
$$
then \eqref{non:05} gives 
$$
|\Delta_{n}(E'_{j})| > \frac{e^{-\varepsilon n}}{2n^{2}}\max_{E\in[E_{j},E_{j+1}]}|\Delta'_{n}(E)| \geq \frac{e^{-\varepsilon n}}{2n^{2}}\max(|\Delta'_{n}(E_{j})|,|\Delta'_{n}(E_{j+1})|).
$$
Last's estimate \eqref{non:03} and the upper bound on the bandwidth \eqref{non:11} give 
\begin{equation}\label{non:01}
|\Delta_{n}(E'_{j})|> \frac{e^{-\varepsilon n}}{2n^{2}}(1+\sqrt{5})e^{(\max(\gamma(\lambda_{j}(0)),\gamma(\lambda_{j+1}(0)))-\varepsilon)n}.
\end{equation} 
In the cases where $2\leq j\leq n-1$, \eqref{non:01} implies \eqref{non:12}, so for all $0\leq g\leq \gamma(\lambda_{j}(0))-\varepsilon$, we have 
$$
|\Delta_{n}(E'_{j-1})|, |\Delta_{n}(E'_{j})| > 2\cosh(ng)
$$
which implies $\lambda_{j}(g)\in (E'_{j-1},E'_{j})$ for all such $g$. 

Consider $\lambda_{1}(g)$. In the case that the eigenvalue moves to the right, i.e., $\lambda_{1}(0)>E_{1}$ we use the lower bound \eqref{non:01} on $|\Delta'_{n}(E'_{1})|$. In the case that the eigenvalue moves to the left, then the eigenvalue is real $\lambda_{1}(g)\in\R$ for every non-negative $g\geq0$ since the trace has no turning points below $E'_{1}$. It remains to appropriately scale the errors in the exponents to match the statement of the lemma. 
\end{proof}

\itshape{Address:} \scshape{School of Mathematical Sciences, Queen Mary University of London, London E1 4NS, United Kingdom.} 

\itshape{E-mail:} \scshape{l.haeming@qmul.ac.uk.}

\end{document}